\newtheorem{theorem}{Theorem}
\newtheorem{question}{Question}
\newtheorem{corollary}[theorem]{Corollary}
\newtheorem{lemma}[theorem]{Lemma}
\newenvironment{proof}[1][Proof.]{\begin{trivlist}
\item[\hskip \labelsep {\bfseries #1}]}{\end{trivlist}}
\newcommand{\AmS}{{\protect\the\textfont2
  A\kern-.1667em\lower.5ex\hbox{M}\kern-.125emS}}
\title{On interval edge-colorings of outerplanar graphs}
\author{Petros A. Petrosyan\address[MCSD]{Department of Informatics and Applied Mathematics,\\
Yerevan State University, 0025, Armenia}%
\address{Institute for Informatics and Automation Problems,\\
National Academy of Sciences, 0014, Armenia}%
\thanks{email: pet\_petros@\{ipia.sci.am, ysu.am, yahoo.com\}}}
\begin{document}

% typeset front matter
\maketitle

\begin{abstract}
An edge-coloring of a graph $G$ with colors $1,\ldots,t$ is called
an interval $t$-coloring if all colors are used, and the colors of
edges incident to any vertex of $G$ are distinct and form an
interval of integers. A graph $G$ is interval colorable if it has an
interval $t$-coloring for some positive integer $t$. For an interval
colorable graph $G$, the least value of $t$ for which $G$ has an
interval $t$-coloring is denoted by $w(G)$. A graph $G$ is
outerplanar if it can be embedded in the plane so that all its
vertices lie on the same (unbounded) face. In this paper we show
that if $G$ is a $2$-connected outerplanar graph with $\Delta(G)=3$,
then $G$ is interval colorable and
\begin{center}
$w(G)=\left\{
\begin{tabular}{ll}
$3$, & if $\vert V(G)\vert$ is even, \\
$4$, & if $\vert V(G)\vert$ is odd. \\
\end{tabular}%
\right.$
\end{center}

We also give a negative answer to the question of Axenovich on the
outerplanar triangulations.\\

Keywords: edge-coloring, interval coloring, outerplanar graph,
outerplanar triangulation

\end{abstract}

\section{Introduction}\

In this paper we consider graphs which are finite, undirected, and
have no loops or multiple edges. Let $V(G)$ and $E(G)$ denote the
sets of vertices and edges of a graph $G$, respectively. The degree
of a vertex $v\in V(G)$ is denoted by $d_{G}(v)$, the maximum degree
of $G$ by $\Delta (G)$, and the chromatic index of $G$ by
$\chi^{\prime}\left(G\right)$. A graph $G$ is outerplanar if it can
be embedded in the plane so that all its vertices lie on the same
(unbounded) face. An outerplanar triangulation is an outerplanar
graph in which every bounded face is a triangle. An edge of the
outerplanar graph is internal if it does not belong to unbounded
face. A separating triangle of the outerplanar graph is a triangular
face in which every edge is internal. The terms and concepts that we
do not define can be found in \cite{b10}.

A proper edge-coloring of a graph $G$ is a coloring of the edges of
$G$ such that no two adjacent edges receive the same color. If
$\alpha $ is a proper edge-coloring of $G$ and $v\in V(G)$, then
$S\left(v,\alpha \right)$ denotes the set of colors of edges
incident to $v$. A proper edge-coloring of a graph $G$ with colors
$1,\ldots ,t$ is called an interval $t$-coloring if all colors are
used, and for any vertex $v$ of $G$, the set $S\left(v,\alpha
\right)$ is an interval of integers. A graph $G$ is interval
colorable if it has an interval $t$-coloring for some positive
integer $t$. The set of all interval colorable graphs is denoted by
$\mathfrak{N}$. For a graph $G\in \mathfrak{N}$, the least value of
$t$ for which $G$ has an interval $t$-coloring is denoted by
$w\left(G\right)$.

The concept of interval coloring of graphs was introduced by
Asratian and Kamalian \cite{b1,b2}. In \cite{b1,b2}, they proved
that if $G$ is interval colorable, then $\chi^{\prime
}\left(G\right)=\Delta(G)$. They also showed that if a triangle-free
graph $G$ has an interval $t$-coloring, then $t\leq \left\vert
V(G)\right\vert -1$. In \cite{b6}, Kamalian investigated interval
colorings of complete bipartite graphs and trees. In particular, he
proved that the complete bipartite graph $K_{m,n}$ has an interval
$t$-coloring if and only if $m+n-\gcd(m,n)\leq t\leq m+n-1$, where
$\gcd(m,n)$ is the greatest common divisor of $m$ and $n$. In
\cite{b7}, Petrosyan investigated interval colorings of complete
graphs and $n$-dimensional cubes. In particular, he proved that if
$n\leq t\leq \frac{n\left(n+1\right)}{2}$, then the $n$-dimensional
cube $Q_{n}$ has an interval $t$-coloring. Recently, Petrosyan,
Khachatrian and Tananyan \cite{b8} showed that the $n$-dimensional
cube $Q_{n}$ has an interval $t$-coloring if and only if $n\leq
t\leq \frac{n\left(n+1\right)}{2}$. In \cite{b9}, Sevast'janov
proved that it is an $NP$-complete problem to decide whether a
bipartite graph has an interval coloring or not.

Proper edge-colorings of outerplanar graphs were investigated by
Fiorini in \cite{b4}. In \cite{b4}, he proved that if $G$ is an
outerplanar graph, then $\chi^{\prime}\left(G\right)=\Delta(G)$ if
and only if $G$ is not an odd cycle. Interval edge-colorings of
outerplanar graphs were first considered by Axenovich in \cite{b3}.
In \cite{b3}, she proved that all outerplanar triangulations with
more than three vertices and without separating triangles are
interval colorable. Later, interval edge-colorings of outerplanar
graphs were investigated by Giaro and Kubale in \cite{b5}, where
they proved that all outerplanar bipartite graphs are interval
colorable.

In the present paper we show that if $G$ is a $2$-connected
outerplanar graph with $\Delta(G)=3$, then $G$ is interval colorable
and
\begin{center}
$w(G)=\left\{
\begin{tabular}{ll}
$3$, & if $\vert V(G)\vert$ is even, \\
$4$, & if $\vert V(G)\vert$ is odd. \\
\end{tabular}%
\right.$
\end{center}

We also give a negative answer to the question of Axenovich on the
outerplanar triangulations.

\bigskip

\section{Interval edge-colorings of subcubic outerplanar graphs}\

First we need the following lemma which was proved in \cite{b4}.

\begin{lemma}
\label{mylemma} If $G$ is a $2$-connected outerplanar graph with
$\Delta(G)=3$, then $G$ has either (1) $u$ and $v$ adjacent vertices
such that $d_{G}(u)=d_{G}(v)=2$, or (2) $u,v$ and $w$ mutually
adjacent vertices such that $d_{G}(u)=d_{G}(w)=3$ and $d_{G}(v)=2$.
\end{lemma}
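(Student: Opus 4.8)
The plan is to exploit the rigid structure of a $2$-connected outerplanar graph. First I would recall that if $G$ is $2$-connected and outerplanar, then in an outerplanar embedding the boundary of the unbounded face is a simple cycle $C=v_1v_2\cdots v_nv_1$ passing through \emph{every} vertex, and every remaining edge is a chord of $C$ (an edge joining two non-consecutive vertices of $C$), the chords being pairwise non-crossing. Next I would use the degree hypothesis: each $v_i$ already carries the two cycle edges $v_{i-1}v_i$ and $v_iv_{i+1}$, so since $\Delta(G)=3$ each vertex is incident with at most one chord, while $2$-connectedness forbids degree less than $2$. Hence every vertex has degree $2$ or $3$, a vertex has degree $3$ if and only if it is an endpoint of a (unique) chord, and consequently the chords form a non-crossing matching on $V(G)$, the matched vertices being exactly those of degree $3$.

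With this picture in place, the two alternatives of the lemma become purely combinatorial statements about this matching: alternative (1) asks for two consecutive unmatched vertices of $C$, while alternative (2) asks for a chord $v_iv_{i+2}$ cutting off a single vertex $v_{i+1}$ (then $v_i,v_{i+2}$ have degree $3$, $v_{i+1}$ has degree $2$, and $v_iv_{i+1}v_{i+2}$ is the required triangle). Since $\Delta(G)=3$ there is at least one chord, for otherwise $G=C_n$ has no vertex of degree $3$. Among all chords I would choose one, say $c^{\ast}=v_iv_j$, that minimizes the number $k$ of vertices lying strictly inside its shorter arc $v_{i+1},\ldots,v_{j-1}$; note $k\ge 1$ because a chord joins non-consecutive vertices.

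The two cases of this minimal chord then deliver the two alternatives. If $k=1$, then $c^{\ast}=v_iv_{i+2}$ cuts off the single vertex $v_{i+1}$, which cannot itself be matched (any chord issuing from it would cross $c^{\ast}$), so we are exactly in case (2). If $k\ge 2$, I would show that none of the interior vertices $v_{i+1},\ldots,v_{j-1}$ is matched: a chord leaving the arc would cross $c^{\ast}$, whereas a chord $v_av_b$ with $i<a<b<j$ would have at most $k-2<k$ vertices strictly inside its shorter arc, contradicting the minimality of $c^{\ast}$. Hence $v_{i+1},\ldots,v_{j-1}$ are all unmatched, and the first two of them, $v_{i+1}$ and $v_{i+2}$, are consecutive vertices of degree $2$, giving case (1). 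The main obstacle is not any isolated step but getting the structural setup exactly right, namely establishing that the chords form a non-crossing matching and then running the minimality argument cleanly; once this is done, both alternatives fall out according to whether the innermost chord cuts off one vertex or more than one.
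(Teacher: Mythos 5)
Your proof is correct. Note, however, that the paper itself contains no proof of this lemma to compare against: it is stated as a known result, quoted from Fiorini's paper \cite{b4} on the chromatic index of outerplanar graphs. Your argument is the natural self-contained one, and it is sound: for a $2$-connected outerplanar graph the boundary of the unbounded face is a Hamiltonian cycle $C$, all remaining edges are pairwise non-crossing chords, and $\Delta(G)=3$ forces the chords to form a nonempty non-crossing matching whose endpoints are exactly the degree-$3$ vertices. Taking a chord $c^{\ast}=v_iv_j$ whose shorter arc has the fewest interior vertices, minimality and non-crossing show every vertex strictly inside that arc is unmatched, hence of degree $2$; with $k=1$ interior vertex the chord $v_iv_{i+2}$ yields the triangle of alternative (2) (its endpoints have degree exactly $3$ since $\Delta(G)=3$), and with $k\geq 2$ the vertices $v_{i+1},v_{i+2}$ are adjacent and both of degree $2$, which is alternative (1). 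One small point of phrasing: when you say a chord ``leaving the arc would cross $c^{\ast}$,'' a chord from an interior vertex to $v_i$ or $v_j$ themselves would not cross $c^{\ast}$; it is excluded not by planarity but by the matching property (it would give $v_i$ or $v_j$ degree at least $4$), which you did establish at the outset, so the argument stands. Your minimal-chord selection is essentially equivalent to choosing a leaf face of the weak dual tree of the outerplanar graph, which is the standard route (and presumably Fiorini's); writing it out as you do has the benefit of making the paper self-contained where it currently leans on a citation.
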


Now we can prove our first result.

\begin{theorem}
\label{mytheorem1} If $G$ is a $2$-connected outerplanar graph $G$
with $\Delta(G)\leq 3$ and $G$ is not an odd cycle, then $G\in
\mathfrak{N}$ and $w(G)\leq 4$.
\end{theorem}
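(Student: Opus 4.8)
The plan is to argue by strong induction on the number of edges of $G$, with the structural dichotomy of Lemma~\ref{mylemma} driving the reduction. For the base of the induction, if $\Delta(G)\le 2$ then the $2$-connectivity of $G$ forces $G$ to be a cycle, and since $G$ is not an odd cycle it is an even cycle, which has an obvious interval $2$-coloring; thus $w(G)=2\le 4$. So I would assume $\Delta(G)=3$ and apply Lemma~\ref{mylemma} to locate one of the two reducible configurations.

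In configuration (1) we have adjacent $u,v$ with $d_{G}(u)=d_{G}(v)=2$; write $x$ for the other neighbour of $u$ and $y$ for the other neighbour of $v$, so that $u$ and $v$ form the interior of a path from $x$ to $y$. I would delete $u$ and $v$ and insert the edge $xy$, obtaining a graph $G'$ that is again $2$-connected, outerplanar, has $\Delta(G')\le 3$, and (since it retains the degree-$3$ vertices of $G$) is not a cycle. By induction $G'$ has an interval coloring $\varphi$ with colours in $\{1,\dots,4\}$; if $\varphi(xy)=c$, then colouring $xu$ and $vy$ with $c$ and $uv$ with some $c'\in\{c-1,c+1\}\cap\{1,\dots,4\}$ preserves the intervals at $x$ and $y$ and produces intervals of length $2$ at the new degree-$2$ vertices $u$ and $v$. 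The one thing to check separately is the degenerate situation in which $xy$ is already an edge of $G$ (so the deletion would create a multiple edge) or in which $x=y$; a short case analysis shows these force $G$ to be a small graph that can be coloured by hand.

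In configuration (2) the vertices $u,v,w$ form a triangle with $d_{G}(u)=d_{G}(w)=3$ and $d_{G}(v)=2$; hence $u$ and $w$ each have exactly one neighbour, say $u'$ and $w'$, outside the triangle, while $uw$ is a chord. Here I would delete $v$, colour the smaller graph $G'=G-v$ by induction, and then re-insert $v$ together with the edges $uv$ and $vw$. The subtlety is that, after the colours on $uu'$ and $ww'$ are fixed, the two edges $uv,vw$ must simultaneously extend the colour sets at $u$ and $w$ to intervals of length $3$ while themselves differing by exactly $1$ at $v$; with only four colours available this is not automatic. The way around it is to first normalise $\varphi$ on $G'$ — using that reflecting the palette $i\mapsto 5-i$ again yields an interval coloring, together with a shift of colours along the boundary — so that the colours of $uu'$ and $ww'$ are placed symmetrically within $\{1,2,3,4\}$, after which the triangle can be completed explicitly.

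I expect the extension in configuration (2) to be the main obstacle, for two reasons. First, as the above indicates, the palette is genuinely tight, so the re-insertion only works once the inductive coloring of $G'$ has been put into the right form, and making this normalisation precise is where the real work lies. Second, $G'=G-v$ can itself fail the hypotheses of the theorem: if the chord $uw$ was the only chord of $G$, then $G'$ is a pure cycle, possibly an \emph{odd} one, and hence not interval colorable, so the induction cannot be applied blindly. I would dispose of this degenerate case directly, colouring $G$ — a cycle with a single chord — from scratch, and only invoke the inductive hypothesis when $G'$ is guaranteed to remain a $2$-connected subcubic outerplanar graph that is not an odd cycle.
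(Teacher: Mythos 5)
Your overall skeleton (induction on the number of edges, driven by Lemma~\ref{mylemma}) is the same as the paper's, and your treatment of configuration (1) when $xy\notin E(G)$ is essentially the paper's argument. But there are two genuine gaps. First, you dismiss the sub-case $xy\in E(G)$ as a degeneracy that ``forces $G$ to be a small graph that can be coloured by hand.'' That is false: take any $2$-connected subcubic outerplanar graph $H$ having an edge $xy$ on its outer face with $d_{H}(x)=d_{H}(y)=2$, and attach the path $x$-$u$-$v$-$y$; the resulting $G$ is arbitrarily large and lands exactly in this sub-case. Worse, this case cannot be dodged by choosing a different reducible pair: in the graph consisting of an even cycle $x_{1}y_{1}x_{2}y_{2}\cdots x_{k}y_{k}x_{1}$ with a path $x_{i}u_{i}v_{i}y_{i}$ attached across each edge $x_{i}y_{i}$, there are no triangles (so no configuration (2)), and \emph{every} pair of adjacent degree-$2$ vertices has adjacent outer neighbours. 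The paper treats $xy\in E(G)$ as a full case: delete $u,v$ without adding an edge, handle the possibility that $G-u-v$ is an odd cycle by an explicit $4$-coloring, and otherwise extend an inductive coloring of $G-u-v$ by a case analysis on the two-element intervals $S(x,\alpha)$ and $S(y,\alpha)$, which share the colour of $xy$.

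Second, your reduction for configuration (2) --- delete $v$, colour $G'=G-v$, then re-insert $v$ --- has an obstruction that your proposed normalisation cannot remove. If the inductive colouring $\alpha$ of $G'$ happens to satisfy $S(u,\alpha)=S(w,\alpha)=\{c,c+1\}$, i.e.\ $\alpha(uu')=\alpha(ww')$, then the two new edges $uv$ and $vw$ must receive one colour each from $\{c-1,c+2\}$, and these differ by $3$, so the colours at $v$ can never form an interval: the extension is impossible, not merely delicate. The reflection $i\mapsto 5-i$ maps equal colour sets at $u$ and $w$ to equal colour sets again, so it never breaks this symmetry, and ``shifting colours along the boundary'' is not an operation that preserves interval colourings once chords are present; to make your route work you would need a strengthened induction hypothesis guaranteeing a colouring of $G'$ with $\alpha(uu')\neq\alpha(ww')$, which you have not established. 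The paper sidesteps the whole issue by \emph{contracting} the triangle $uvw$ to a single vertex $v^{\star}$: then $v^{\star}$ has degree $2$ in the contracted graph, so its two incident edges automatically receive consecutive colours $c$ and $c+1$; lifting back, $uu'$ and $ww'$ carry consecutive colours, and the triangle can always be completed (colour $uw$ with $c-1$, or with $3$ when $c=1$, and $uv,vw$ with $c,c+1$ in the appropriate order). Your handling of the odd-cycle degeneracies is sound, but these two gaps are fatal to the proof as proposed.
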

\begin{proof}
For the proof, it suffices to show that if $G$ is a $2$-connected
outerplanar graph $G$ with $\Delta(G)\leq 3$ and $G$ is not an odd
cycle, then $G$ has an interval coloring with no more than four
colors.

We show it by induction on $\vert E(G)\vert$. The statement is
trivial for the case $\vert E(G)\vert \leq 5$. Assume that $\vert
E(G)\vert \geq 6$, and the statement is true for all $2$-connected
outerplanar graphs $G^{\prime}$ with $\Delta(G^{\prime})\leq 3$
which are not odd cycles and $\vert E(G^{\prime})\vert<\vert
E(G)\vert$.

Let us consider a $2$-connected outerplanar graph $G$ with
$\Delta(G)\leq 3$ which is not an odd cycle. If $\Delta(G)=2$, then
$G\in \mathfrak{N}$ and $w(G)\leq 2$. Now suppose that
$\Delta(G)=3$. By Lemma \ref{mylemma}, $G$ has either $u$ and $v$
adjacent vertices such that $d_{G}(u)=d_{G}(v)=2$, or $u,v$ and $w$
mutually adjacent vertices such that $d_{G}(u)=d_{G}(w)=3$ and
$d_{G}(v)=2$. We consider two cases.

Case 1: $uv\in E(G)$ and $d_{G}(u)=d_{G}(v)=2$.

Clearly, in this case there are vertices $x,y$ ($x\neq y$) in $G$
such that $ux\in E(G)$ and $vy\in E(G)$.

Case 1.1: $xy\notin E(G)$.

In this case let us consider a $2$-connected outerplanar graph
$G^{\prime}=(G-u-v)+xy$. By induction hypothesis, $G^{\prime}$ has
an interval coloring $\alpha$ with no more than four colors.

If $\alpha(xy)=1$, then we delete the edge $xy$ and color the edges
$ux$ and $vy$ with color $1$ and the edge $uv$ with color $2$. If
$\alpha(xy)\geq 2$, then we delete the edge $xy$ and color the edges
$ux$ and $vy$ with color $\alpha(xy)$ and the edge $uv$ with color
$\alpha(xy)-1$. It is not difficult to see that the obtained
coloring is an interval coloring of the graph $G$ with no more than
four colors.

Case 1.2: $xy\in E(G)$.

In this case let us consider a $2$-connected outerplanar graph
$G^{\prime}=G-u-v$. If $G^{\prime}$ is an odd cycle, then we color
the edges $xy$ with color $3$ and the edges of the $x,y$-path in
$G^{\prime}-xy$ alternately with colors $1$ and $2$. Next, we color
the edge $ux$ with color $2$, the edge $uv$ with color $3$ and the
edge $vy$ with color $4$. Clearly, the obtained coloring is an
interval $4$-coloring of the graph $G$.

Now we can suppose that $G^{\prime}$ is a $2$-connected outerplanar
graph with $\Delta(G^{\prime})\leq 3$ which is not an odd cycle. By
induction hypothesis, $G^{\prime}$ has an interval coloring $\alpha$
with no more than four colors. Since $G$ is $2$-connected, we have
$d_{G}(x)=d_{G}(y)=3$.

If $S(x,\alpha)=S(y,\alpha)=\{c,c+1\}$, then if $c=1$, then we color
the edges $ux$ and $vy$ with color $3$ and the edge $uv$ with the
color $2$; otherwise we color the edges $ux$ and  $vy$ with color
$c-1$ and the edge $uv$ with color $c$. If $S(x,\alpha)\cup
S(y,\alpha)=\{c,c+1,c+2\}$, then without loss of generality we may
assume that $S(x,\alpha)=\{c,c+1\}$ and $S(y,\alpha)=\{c+1,c+2\}$.
We color the edge $ux$ with color $c+2$, the edge $uv$ with color
$c+1$ and the edge $vy$ with color $c$. Clearly, the obtained
coloring is an interval coloring of the graph $G$ with no more than
four colors.

Case 2: $uv,vw,uw\in E(G)$ and $d_{G}(u)=d_{G}(w)=3$ and
$d_{G}(v)=2$.

In this case by contracting the $uvw$ triangle to a single vertex
$v^{\star}$, we obtain a $2$-connected outerplanar graph
$G^{\prime}$ with $\Delta(G^{\prime})\leq 3$. If $G^{\prime}$ is an
odd cycle, then we color the edge $uw$ with color $3$, the first
edge of the $u,w$-path in $G$ with color $4$ and the remaining edges
alternately with colors $3$ and $2$. Next, we color the edge $uv$
with color $2$ and the edge $vw$ with color $1$. Clearly, the
obtained coloring is an interval $4$-coloring of the graph $G$.

Now we can suppose that $G^{\prime}$ is a $2$-connected outerplanar
graph with $\Delta(G^{\prime})\leq 3$ which is not an odd cycle. By
induction hypothesis, $G^{\prime}$ has an interval coloring $\alpha$
with no more than four colors.

Let $S(v^{\star},\alpha)=\{c,c+1\}$. If $c=1$, then we color the
edge $uw$ with color $3$ and the edges $uv$ and $vw$ with colors $1$
and $2$ or $2$ and $1$ depending on the colors of the colored edges
incident to vertices $u$ and $w$; otherwise we color the edge $uw$
with color $c-1$ and the edges $uv$ and $vw$ with colors $c$ and
$c+1$ or $c+1$ and $c$ depending on the colors of the colored edges
incident to vertices $u$ and $w$. Clearly, the obtained coloring is
an interval coloring of the graph $G$ with no more than four colors.
~$\square$
\end{proof}

\begin{figure}[h]
\begin{center}
\includegraphics[width=15pc]{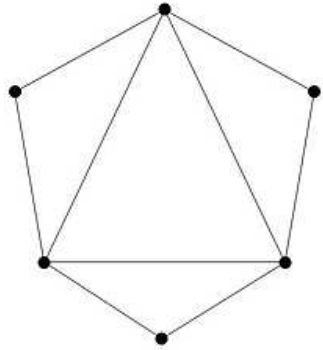}\\
\caption{A 2-connected outerplanar graph $G$ with $\Delta(G)=4$
which has no interval coloring.}\label{fig1}
\end{center}
\end{figure}

Next we prove the following result:

\begin{theorem}
\label{mytheorem2} If $G$ is a $2$-connected outerplanar graph $G$
with $\Delta(G)=3$, then $G\in \mathfrak{N}$ and
\begin{center}
$w(G)=\left\{
\begin{tabular}{ll}
$3$, & if $\vert V(G)\vert$ is even, \\
$4$, & if $\vert V(G)\vert$ is odd. \\
\end{tabular}%
\right.$
\end{center}
\end{theorem}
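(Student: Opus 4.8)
The plan is to combine the upper bound from Theorem~\ref{mytheorem1} with a parity obstruction that pins down the exact value of $w(G)$. First I would record the two easy bounds. Every interval coloring is a proper edge-coloring and hence uses at least $\Delta(G)=3$ colors, so $w(G)\ge 3$. On the other hand, a $2$-connected graph with $\Delta(G)=3$ is not an odd cycle (odd cycles have maximum degree $2$), so Theorem~\ref{mytheorem1} gives $w(G)\le 4$. Thus $w(G)\in\{3,4\}$, and the whole statement reduces to deciding, according to the parity of $|V(G)|$, which of the two values occurs.

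The odd case rests on a perfect-matching observation. In a $2$-connected graph with $\Delta(G)=3$ every vertex has degree $2$ or $3$. In any interval $3$-coloring $\alpha$, a vertex of degree $3$ must receive the full interval $\{1,2,3\}$, while a vertex of degree $2$ must receive an interval of length two, namely $\{1,2\}$ or $\{2,3\}$; in every case color $2$ appears exactly once at the vertex. Hence the edges colored $2$ form a perfect matching of $G$, which forces $|V(G)|$ to be even. Consequently, if $|V(G)|$ is odd then $G$ admits no interval $3$-coloring, so $w(G)\ge 4$, and together with the upper bound this yields $w(G)=4$.

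For the even case it remains to exhibit an interval coloring of $G$ using at most three colors, which with $w(G)\ge 3$ gives $w(G)=3$. I would prove, by induction on $|E(G)|$, the slightly more general claim that every $2$-connected outerplanar graph with $\Delta\le 3$ that is not an odd cycle and has an even number of vertices admits an interval coloring with colors in $\{1,2,3\}$ (this also covers even cycles, which use only two colors). The induction reuses verbatim the three reductions from the proof of Theorem~\ref{mytheorem1}: in Case~1.1 pass to $G'=(G-u-v)+xy$, in Case~1.2 pass to $G'=G-u-v$, and in Case~2 contract the triangle $uvw$ to a single vertex $v^{\star}$. Each reduction decreases $|V|$ by exactly two, so $|V(G')|$ stays even; in particular $G'$ can never be an odd cycle, and the induction hypothesis supplies a coloring $\alpha$ of $G'$ with colors in $\{1,2,3\}$.

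The main point, and the only place where something beyond Theorem~\ref{mytheorem1} is required, is to check that each gluing step stays within three colors. This is exactly where parity does the work: the subcases of Theorem~\ref{mytheorem1} that spent a fourth color were precisely the odd-cycle subcases, and these no longer arise. In the surviving subcases the extension is the same as before, and since $\alpha$ now uses only colors $1,2,3$ the top color is capped at $3$. For instance, in the subcase $S(x,\alpha)\cup S(y,\alpha)=\{c,c+1,c+2\}$ of Case~1.2 the bound $c+2\le 3$ forces $c=1$, so the assigned colors $c,c+1,c+2$ are just $1,2,3$; the length-two subcases have $c\le 2$, and the recolorings prescribed in Theorem~\ref{mytheorem1} never leave $\{1,2,3\}$. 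Verifying these finitely many gluings, that each keeps every vertex's color set an interval while using only colors in $\{1,2,3\}$, is routine but is the step that must be carried out carefully; it gives $w(G)\le 3$ and completes the proof.
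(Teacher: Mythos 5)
Your proposal is correct, and your lower bound is exactly the paper's argument: in any interval $3$-coloring of a graph whose vertices all have degree $2$ or $3$, color $2$ appears exactly once at every vertex, so the color-$2$ edges form a perfect matching, which is impossible when $\vert V(G)\vert$ is odd. For the even case, however, you take a genuinely different route. The paper's proof is much shorter: since $G$ is $2$-connected and outerplanar it is Hamiltonian (the boundary of the outer face is a Hamiltonian cycle $C$), and as $\vert V(G)\vert$ is even one colors $E(C)$ alternately with $1$ and $2$ and gives every remaining edge color $3$; because $\Delta(G)=3$ these remaining edges form a matching, so every vertex sees $\{1,2\}$ or $\{1,2,3\}$, yielding an interval $3$-coloring directly. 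You instead rerun the induction of Theorem~\ref{mytheorem1} with the added invariant that $\vert V\vert$ is even, noting that each of the three reductions (Case~1.1, Case~1.2, and the triangle contraction of Case~2) removes exactly two vertices, so parity is preserved and the reduced graph $G'$ can never be an odd cycle; since the only subcases of Theorem~\ref{mytheorem1} that spend a fourth color are precisely the odd-cycle subcases, the surviving gluings stay inside $\{1,2,3\}$ (e.g., $c+2\le 3$ forces $c=1$ in the three-color subcase of Case~1.2). This is sound, but it obliges you to re-verify all gluing steps under the three-color cap and to carry even cycles as an extra base case. The paper's Hamiltonian-cycle construction buys brevity and an explicit coloring; your induction buys independence from the Hamiltonicity fact and shows that the machinery of Theorem~\ref{mytheorem1} is parity-compatible, which is a mildly stronger structural observation.
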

\begin{proof}
By Theorem \ref{mytheorem1}, we have $G\in \mathfrak{N}$ and
$w(G)\leq 4$. On the other hand, since $\Delta(G)=3$, we obtain
$3\leq w(G)\leq 4$. We consider two cases.

Case 1: $\vert V(G)\vert$ is even.

Since $G$ is $2$-connected and outerplanar, it is clear that $G$ is
Hamiltonian. Let $C$ be a Hamiltonian cycle in $G$. Since $\vert
V(G)\vert$ is even, clearly $\vert C\vert$ is even, too. Now we
construct an interval $3$-coloring of the graph $G$. First we color
the edges of the cycle $C$ alternately with colors $1$ and $2$. Next
we color the edges from the set $E(G)\setminus E(C)$ by color $3$.
Since $E(G)\setminus E(C)$ is a matching in $G$, the obtained
coloring is an interval $3$-coloring of the graph $G$ and thus
$w(G)=3$.

Case 2: $\vert V(G)\vert$ is odd.

For the proof, it suffice to show that $w(G)\geq 4$.

Suppose, to the contrary, that $G$ has an interval $3$-coloring
$\alpha$. In this case we consider the set $S(v,\alpha)$ for every
$v\in V(G)$. Since $G$ is $2$-connected and outerplanar, we have
$1\leq {\min}~S(v,\alpha)\leq 2$ for every $v\in V(G)$. This implies
that the edges with color $2$ form a perfect matching in $G$, but
this contradicts the fact that $\vert V(G)\vert$ is odd. Thus,
$w(G)=4$. ~$\square$
\end{proof}

On the other hand, there are $2$-connected outerplanar graphs $G$
with $\Delta(G)=4$ which are not interval colorable. For example,
the graph $G$ shown in Fig. \ref{fig1} has no interval coloring. Now
we show a more general result. For that we define a triangle graph
$T_{k,l,m}$ ($k,l,m\in \mathbb{N}$) as follows:

\begin{center}
$V\left(T_{k,l,m}\right)=\{x,y,z,u_{1},\ldots,
u_{2k-1},v_{1},\ldots,v_{2l-1},w_{1},\ldots,w_{2m-1}\}$ and\
$E\left(T_{k,l,m}\right)=\{xy,xu_{1},u_{2k-1}y,yz,yv_{1},v_{2l-1}z,xz,xw_{1},w_{2m-1}z\}\cup$\\
$\{u_{i}u_{i+1}:1\leq i\leq 2k-2\}\cup \{v_{i}v_{i+1}:1\leq i\leq
2l-2\}\cup \{w_{i}w_{i+1}:1\leq i\leq 2m-2\}$.
\end{center}

Clearly, $T_{k,l,m}$ is a $2$-connected outerplanar graph with
$\Delta\left(T_{k,l,m}\right)=4$.

\begin{theorem}
\label{mytheorem3} For any $k,l,m\in \mathbb{N}$, we have
$T_{k,l,m}\notin \mathfrak{N}$.
\end{theorem}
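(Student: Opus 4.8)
The plan is to argue by contradiction using a parity invariant attached to the three degree-$4$ corners $x,y,z$ and the three even-length paths joining them. Suppose $T_{k,l,m}$ admits an interval coloring $\alpha$. By the theorem of Asratian and Kamalian quoted in the introduction this already forces $\chi'(T_{k,l,m})=\Delta(T_{k,l,m})=4$, but in fact all I shall need is the defining interval property of $\alpha$ at each vertex.

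First I would record a lemma about the long paths. Let $P$ be a path all of whose internal vertices have degree $2$ in $T_{k,l,m}$, and list its edges in order as $e_1,\ldots,e_{2k}$. At every internal vertex the two incident edges receive consecutive colors, so $|\alpha(e_{i+1})-\alpha(e_i)|=1$ for each $i$. Hence $\alpha(e_{2k})-\alpha(e_1)$ is a sum of $2k-1$ terms each equal to $\pm 1$, and since $2k-1$ is odd this difference is odd; therefore the first and last edge of such a path have colors of opposite parity. Applying this to the three paths from $x$ to $y$, from $y$ to $z$, and from $x$ to $z$ (which have $2k$, $2l$, $2m$ edges respectively), I get that $\alpha(xu_1)$ and $\alpha(u_{2k-1}y)$ differ in parity, $\alpha(yv_1)$ and $\alpha(v_{2l-1}z)$ differ in parity, and $\alpha(xw_1)$ and $\alpha(w_{2m-1}z)$ differ in parity.

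The second ingredient is a parity count at the degree-$4$ vertices. Since the four colors meeting at each of $x,y,z$ are four consecutive integers, exactly two of them are odd, so at each such vertex the parities of the four incident edge-colors sum to an even number. Writing these three congruences modulo $2$ and adding them, the three triangle edges $xy,yz,xz$ each occur at exactly two of the corners and so cancel, leaving the sum of the parities of the six outer path-end edges $xu_1,u_{2k-1}y,yv_1,v_{2l-1}z,xw_1,w_{2m-1}z$ congruent to $0$ modulo $2$. Grouping these six edges into the three pairs coming from the three paths, the path lemma says each pair contributes $1$ modulo $2$, so the total is $3\equiv 1\pmod 2$, contradicting the value $0$ just obtained. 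This contradiction shows $T_{k,l,m}\notin\mathfrak{N}$.

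The step I would check most carefully, and the only real bookkeeping obstacle, is the summation of the three vertex congruences: one must be certain that each triangle edge is counted at exactly two of $x,y,z$ (so that it drops out modulo $2$) while each path-end edge is counted at exactly one corner (so that it survives). The evenness of each path length is exactly what makes every path flip parity between its two endpoints, and it is the oddness of the number of concurrent paths at the triangle, namely three, that produces the final clash. In other words, the argument says that an even--even--even triple of path lengths cannot be reconciled with the ``two odd colors per corner'' constraint at the three degree-$4$ vertices.
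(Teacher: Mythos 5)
Your proof is correct. Both you and the paper rest the contradiction on the same two parity facts: at a degree-$4$ vertex the four colors are consecutive, hence two odd and two even, and along each path of even length ($2k$, $2l$, $2m$ edges) whose internal vertices have degree $2$ the color parity alternates, so the two end edges have opposite parities. Where you diverge is in how the contradiction is extracted. The paper argues locally: by pigeonhole some corner of the triangle $xyz$, say $x$, has its two triangle edges colored with the same parity; it then splits into the even--even and odd--odd cases and propagates parities along the paths to $y$ and $z$, finally clashing with the parity flip forced on the $y$--$z$ path. You instead sum the three mod-$2$ congruences at $x,y,z$, observe that each triangle edge is counted twice and so cancels, and conclude that the six path-end edges must have parity sum $0$, while the path lemma forces that sum to be $3\equiv 1\pmod 2$. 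Your global double count eliminates the pigeonhole step, the WLOG, and the two symmetric cases, and it isolates the true source of the obstruction: an \emph{odd} number (three) of even-length paths meeting at corners whose degrees force balanced parity; this makes the argument immediately portable to other configurations of even-degree corners joined by even paths. The paper's case analysis is more pedestrian but requires no bookkeeping about which edges are counted how many times, which is precisely the step you correctly flag as the one needing care.
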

\begin{proof}
Suppose, to the contrary, that the graph $T_{k,l,m}$ has an interval
$t$-coloring $\alpha$ for some $t\geq 4$.

Since all degrees of vertices of the graph $T_{k,l,m}$ are even, the
sets $S(x,\alpha),S(y,\alpha)$ and $S(z,\alpha)$ contain two even
colors and two odd colors, and the sets
$S(u_{i},\alpha),S(v_{j},\alpha)$ and $S(w_{p},\alpha)$ contain one
even color and one odd color for
$i=1,\ldots,2k-1,j=1,\ldots,2l-1,p=1,\ldots,2m-1$.

Consider the triangle $xyz$. Clearly, there is a vertex of the
triangle for which the colors of two incident edges of the triangle
have the same parity. Without loss of generality, we may assume that
this vertex is $x$ and $\alpha(xy)$ and $\alpha(xz)$ have the same
parity. If $\alpha(xy)$ and $\alpha(xz)$ are even colors, then
$\alpha(xu_{1})$ and $\alpha(xw_{1})$ are odd colors, and thus
$\alpha(u_{2k-1}y)$ and $\alpha(w_{2m-1}z)$ are even colors. This
implies that $\alpha(yz)$, $\alpha(yv_{1})$ and $\alpha(v_{2l-1}z)$
are odd colors. On the other hand, since $\alpha(yz)$,
$\alpha(yv_{1})$ are odd colors, we obtain $\alpha(v_{2l-1}z)$ is an
even color, which is a contradiction. Similarly, if $\alpha(xy)$ and
$\alpha(xz)$ are odd colors, then $\alpha(xu_{1})$ and
$\alpha(xw_{1})$ are even colors, and thus $\alpha(u_{2k-1}y)$ and
$\alpha(w_{2m-1}z)$ are odd colors. This implies that $\alpha(yz)$,
$\alpha(yv_{1})$ and $\alpha(v_{2l-1}z)$ are even colors. On the
other hand, since $\alpha(yz)$, $\alpha(yv_{1})$ are even colors, we
obtain $\alpha(v_{2l-1}z)$ is an odd color, which is a
contradiction. ~$\square$
\end{proof}

\bigskip

\section{Interval edge-colorings of outerplanar triangulations}\

In \cite{b3}, Axenovich showed that all outerplanar triangulations
with more than three vertices and without separating triangles are
interval colorable, and also she posed the following

\begin{question}
Is it true that an outerplanar triangulation has an interval
coloring if and only if it does not have a separating triangle?
\end{question}

\begin{figure}[h]
\begin{center}
\includegraphics[width=30pc]{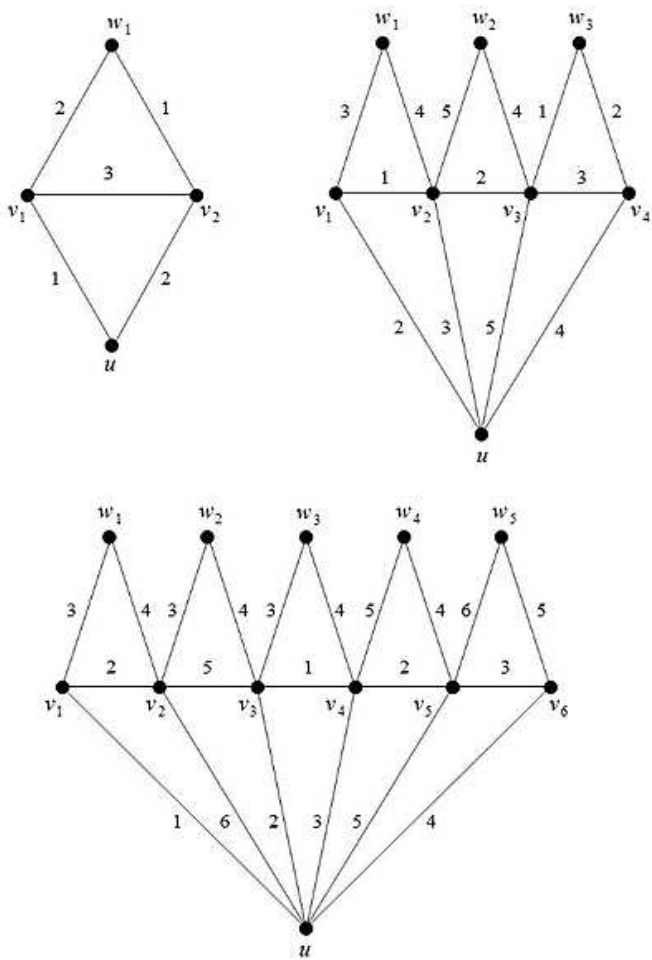}\\
\caption{An interval $3$-coloring of $TF_{3}$, an interval
$5$-coloring of $TF_{5}$ and an interval $6$-coloring of
$TF_{7}$.}\label{fig2}
\end{center}
\end{figure}

In this section we give a negative answer to the question. For that
we define a triangular fan graph $TF_{n}$ ($n\geq 3$) as follows:
\begin{center}
$V(TF_{n})=\{u,v_{1},\ldots,v_{n-1},w_{1},\ldots,w_{n-2}\}$ and\
$E(TF_{n})=\{uv_{i}:1\leq i\leq n-1\}\cup
\{v_{i}w_{i},w_{i}v_{i+1},v_{i}v_{i+1}:1\leq i\leq n-2\}$.
\end{center}

Clearly, $TF_{n}$ is an outerplanar triangulation.

\begin{theorem}
\label{mytheorem4} For any $n\geq 3$, $TF_{n}$ has an interval
$\Delta(TF_{n})$-coloring.
\end{theorem}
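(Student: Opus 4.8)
The plan is to exhibit an explicit interval coloring for each $n$, after first pinning down $\Delta(TF_n)$. Reading off the degrees, $d_{TF_n}(u)=n-1$, the endpoints $v_1,v_{n-1}$ have degree $3$, the interior vertices $v_2,\dots,v_{n-2}$ have degree $5$, and each $w_i$ has degree $2$; hence $\Delta(TF_n)=\max\{n-1,5\}$, i.e.\ $\Delta=3$ for $n=3$, $\Delta=5$ for $4\le n\le 6$, and $\Delta=n-1$ for $n\ge 6$. I would first dispose of the small cases $n\le 6$ (two of which are drawn in Fig.~\ref{fig2}) by writing the colorings down by hand, since there $\Delta\in\{3,5\}$ and the central vertex does not yet dominate.

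The substance is the range $n\ge 7$, where $\Delta=n-1=d_{TF_n}(u)$, so that every interval $(n-1)$-coloring $\alpha$ must satisfy $S(u,\alpha)=\{1,\dots,n-1\}$; in particular all colors are forced onto the star at $u$. I would then build the rest of the coloring gap by gap, a \emph{gap} being the pair of triangles $uv_iv_{i+1}$ and $v_iw_iv_{i+1}$ lying between consecutive spokes. For each $i$ one must choose the three colors of the triangle $v_iw_iv_{i+1}$ — the path color $\alpha(v_iv_{i+1})$ and the two flap colors $\alpha(v_iw_i),\alpha(w_iv_{i+1})$ — subject to three simultaneous demands: each interior $v_i$ must see five \emph{consecutive} colors (the spoke color $\alpha(uv_i)$ together with two colors from the gap on its left and two from the gap on its right), each $w_i$ must see two consecutive colors, and the endpoints $v_1,v_{n-1}$ must see three consecutive colors. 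I would carry this out as a single left-to-right sweep, maintaining as an invariant the length-two ``interface'' $\{\alpha(v_iv_{i+1}),\alpha(w_iv_{i+1})\}$ passed from gap $i$ to the vertex $v_{i+1}$, and fixing the order in which $1,\dots,n-1$ are placed on the spokes as the sweep dictates rather than in advance.

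The hard part is precisely this reconciliation. Since $S(u,\alpha)$ is forced to be all of $\{1,\dots,n-1\}$, the length-five window each interior $v_i$ may occupy must slide so as to track $\alpha(uv_i)$, yet every flap vertex $w_i$ rigidly demands a consecutive pair, and the two requirements fight each other. The naive monotone choice $\alpha(uv_i)=i$ fails — indeed for odd $n$ a reflection argument forces the central path edge to take the non-integer value $n/2$ — and left–right symmetric orderings meet similar obstructions, so an asymmetric placement of the spoke colors is unavoidable. The endpoints are the most delicate point: a one-blade induction $TF_n\to TF_{n+1}$ breaks down, because turning the former terminal vertex $v_{n-1}$ into an interior vertex would require adjoining two \emph{prescribed} colors to its length-three interval in a way that cannot leave it an interval. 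I would therefore either run the explicit sweep above to its end and check that the last gap closes up (as it does, for instance, for $TF_7$ with the spokes $uv_1,\dots,uv_6$ colored $1,5,2,6,4,3$), or set up an induction that adjoins two blades at a time and carries a strengthened invariant fixing the color-interval at the current terminal vertex, so that the interval property survives the extension.
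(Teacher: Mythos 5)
Your structural analysis is sound, and your instincts point exactly where the paper goes: the degree count $\Delta(TF_n)=\max\{n-1,5\}$ (with $\Delta(TF_3)=3$), the observation that for $n\ge 7$ any interval $(n-1)$-coloring must place all colors $1,\dots,n-1$ on the star at $u$, the identification of the degree-two flap vertices $w_i$ as the rigid constraints, and the conclusion that blades must be adjoined two at a time. Your one concrete data point also checks out: the spoke sequence $1,5,2,6,4,3$ on $TF_7$ does extend to an interval $6$-coloring. In fact the paper's proof is precisely your second suggested route, carried out: explicit base colorings of $TF_7$ (odd $n$) and $TF_8$ (even $n$), followed by a closed-form periodic rule that colors each successive pair of blades --- in the odd case, $\beta(uv_{2i+1})=2i+2$, $\beta(uv_{2i+2})=2i+1$, $\beta(v_{2i+1}w_{2i})=\beta(w_{2i+1}v_{2i+2})=2i-1$, $\beta(v_{2i+1}v_{2i+2})=\beta(v_{2i}w_{2i})=2i$, $\beta(v_{2i}v_{2i+1})=2i+1$, $\beta(v_{2i+1}w_{2i+1})=2i-2$, for $i=3,\dots,\frac{n-3}{2}$.

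The genuine gap is that you never write any such rule down, and the entire content of this theorem lives in it. Your proposal ends with two declarations of intent --- ``run the explicit sweep \dots\ and check that the last gap closes up,'' or ``set up an induction that adjoins two blades at a time and carries a strengthened invariant'' --- but neither is an argument. You do not specify the colors of the eight new edges a pair of blades brings in (two spokes, two path edges, four flap edges), you do not state the invariant (which $3$-interval sits at the current terminal vertex, which pair at the last flap), and you do not verify the only nontrivial point: that the old terminal vertex, whose degree jumps from $3$ to $5$ and whose color set must grow from a prescribed $3$-interval to a $5$-interval using two new colors it does not get to choose freely, remains an interval, while the two new spokes absorb the two new largest colors. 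Without this, nothing is proved beyond the finitely many cases you can draw by hand; a greedy sweep could in principle jam at the last gap for one of the two parities, and ruling that out is exactly what the explicit periodic pattern (or a precisely formulated invariant proved to be maintained) accomplishes. To repair the proof, either exhibit the extension rule and check the interval condition at the five affected vertex types, or state and prove the invariant --- the paper does the former.
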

\begin{proof}
We consider two cases.

\begin{figure}[h]
\begin{center}
\includegraphics[width=30pc]{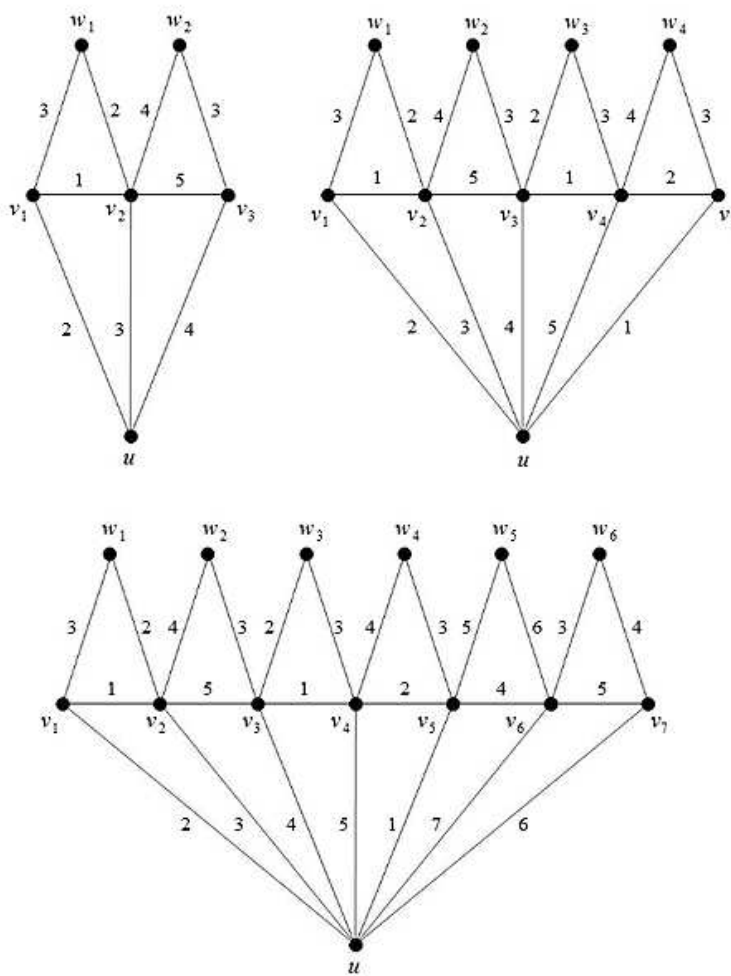}\\
\caption{Interval $5$-colorings of $TF_{4}$ and $TF_{6}$, and an
interval $7$-coloring of $TF_{8}$.}\label{fig3}
\end{center}
\end{figure}

Case 1: $n$ is odd.

Fig. \ref{fig2} gives an interval $3$-coloring of $TF_{3}$, an
interval $5$-coloring of $TF_{5}$ and an interval $6$-coloring of
$TF_{7}$. Let $\alpha$ be an interval $6$-coloring of $TF_{7}$ shown
in Fig. \ref{fig2}. Now we define an edge-coloring $\beta$ of the
graph $TF_{n}$ as follows:
\begin{description}
\item[(1)] for every $e\in E(TF_{7})$, let $\beta(e)=\alpha(e)$;

\item[(2)] for $i=3,\ldots,\frac{n-3}{2}$, let\
\begin{center}
$\beta(uv_{2i+1})=2i+2$ and $\beta(uv_{2i+2})=2i+1$,
$\beta(v_{2i+1}w_{2i})=\beta(w_{2i+1}v_{2i+2})=2i-1$,\
$\beta(v_{2i+1}v_{2i+2})=\beta(v_{2i}w_{2i})=2i$,
$\beta(v_{2i}v_{2i+1})=2i+1$ and $\beta(v_{2i+1}w_{2i+1})=2i-2$.
\end{center}
\end{description}

It is not difficult to see that $\beta$ is an interval
$\Delta(TF_{n})$-coloring of $TF_{n}$ for odd $n$.

Case 2: $n$ is even.

Fig. \ref{fig3} gives interval $5$-colorings of $TF_{4}$ and
$TF_{6}$, and an interval $7$-coloring of $TF_{8}$. Let $\alpha$ be
an interval $7$-coloring of $TF_{8}$ shown in Fig. \ref{fig3}. Now
we define an edge-coloring $\beta$ of the graph $TF_{n}$ as follows:
\begin{description}
\item[(1)] for every $e\in E(TF_{8})$, let $\beta(e)=\alpha(e)$;

\item[(2)] for $i=3,\ldots,\frac{n-4}{2}$, let\
\begin{center}
$\beta(uv_{2i+2})=2i+3$ and $\beta(uv_{2i+3})=2i+2$,
$\beta(v_{2i+2}w_{2i+1})=\beta(w_{2i+2}v_{2i+3})=2i$,
$\beta(v_{2i+2}v_{2i+3})=\beta(v_{2i+1}w_{2i+1})=2i+1$,\\
$\beta(v_{2i+1}v_{2i+2})=2i+2$ and $\beta(v_{2i+2}w_{2i+2})=2i-1$.
\end{center}
\end{description}

It is not difficult to see that $\beta$ is an interval
$\Delta(TF_{n})$-coloring of $TF_{n}$ for even $n$. ~$\square$
\end{proof}

\begin{corollary}
\label{mycorollary} For any $n\geq 3$, we have $TF_{n}\in
\mathfrak{N}$ and $w(TF_{n})=\Delta(TF_{n})$.
\end{corollary}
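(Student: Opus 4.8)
The plan is to derive this corollary directly from Theorem \ref{mytheorem4} together with the general lower bound on interval colorings. Since Theorem \ref{mytheorem4} asserts that $TF_{n}$ admits an interval $\Delta(TF_{n})$-coloring for every $n \geq 3$, the graph $TF_{n}$ is by definition interval colorable, so $TF_{n} \in \mathfrak{N}$ follows immediately. That same coloring witnesses the upper bound $w(TF_{n}) \leq \Delta(TF_{n})$, because $w(TF_{n})$ is the least number of colors in any interval coloring of $TF_{n}$, and we have exhibited one using exactly $\Delta(TF_{n})$ colors.

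For the matching lower bound, I would appeal to the fact that every interval $t$-coloring is in particular a proper edge-coloring. Hence any interval coloring of $TF_{n}$ uses at least $\chi'(TF_{n})$ colors, and since $\chi'(G) \geq \Delta(G)$ holds for every graph, we obtain $w(TF_{n}) \geq \Delta(TF_{n})$. Equivalently, one may argue directly: a vertex of degree $\Delta(TF_{n})$ forces $\Delta(TF_{n})$ pairwise distinct colors on its incident edges, so no fewer than $\Delta(TF_{n})$ colors can occur in any proper edge-coloring. Combining the two inequalities yields $w(TF_{n}) = \Delta(TF_{n})$.

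The argument presents no real obstacle: all the substantive work---the explicit constructions of interval $\Delta$-colorings for each parity of $n$---has already been carried out in Theorem \ref{mytheorem4}, and the corollary merely packages that construction together with a universal lower bound. The only point worth stating carefully is the inequality $w(G) \geq \Delta(G)$, valid for every interval colorable graph, which is precisely what pins the value of $w(TF_{n})$ down to $\Delta(TF_{n})$ rather than leaving it as an upper estimate.
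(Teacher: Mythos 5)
Your proof is correct and matches the paper's (implicit) argument exactly: the paper states this corollary without proof precisely because it follows from Theorem \ref{mytheorem4} together with the trivial lower bound $w(G)\geq\Delta(G)$, which is the derivation you spell out.
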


Clearly, $uv_{i}v_{i+1}$ is a separating triangle in $TF_{n}$ for
$i=2,\ldots,n-3$. Thus, for $n\geq 5$, $TF_{n}$ has $n-4$ separating
triangles.
\bigskip

\end{document}